\title[boundaries]{Invariant boundary distributions for finite graphs}
\author{Guyan Robertson}
\address{School of Mathematics and Statistics, University of Newcastle, NE1 7RU, U.K.}
\email{a.g.robertson@newcastle.ac.uk}
\subjclass{05C25, 20E08, 46L80, 11F85}
\keywords{Graph, Homology, Boundary.}
\font\fiverm=cmr5
\chardef\bslash=`\\ % p. 424, TeXbook
\def\verbatim{\interlinepenalty\@M \@verbatim
  \leftskip\@totalleftmargin\advance\leftskip2pc
  \frenchspacing\@vobeyspaces \@xverbatim}
\newtheorem{theorem}{Theorem}[section]
\newtheorem{corollary}[theorem]{Corollary}
\newtheorem{lemma}[theorem]{Lemma}
\newtheorem{proposition}[theorem]{Proposition}
\theoremstyle{definition}
\newtheorem{example}[theorem]{Example}
\newtheorem{remark}[theorem]{Remark}
\newcommand{\ovl}{\overline}
\newcommand{\cl}[1]{{\mathcal{#1}}}
\newcommand{\bb}[1]{{\mathbb{#1}}}
\newcommand{\fk}[1]{{\mathfrak{#1}}}
\newcounter{picture}
\newcommand{\vf}{{\varphi}}
\newcommand{\SL}{{\text{\rm{SL}}}}
\begin{document}

\begin{abstract} Let $\Gamma$ be the fundamental group of a finite connected graph $\cl G$. Let $\fk M$ be an abelian group. A {\it distribution} on the boundary $\partial\Delta$ of the universal covering tree $\Delta$ is an $\fk M$-valued measure defined on clopen sets.
If $\fk M$ has no $\chi(\cl G)$-torsion then the group of $\Gamma$-invariant distributions on $\partial\Delta$ is isomorphic to $H_1(\cl G,\fk M)$.
\end{abstract}

\maketitle

\section{Introduction}

Let $\cl G$ be a finite connected graph, and let $\Delta$ be its universal covering tree. The edges of $\Delta$ are directed and each geometric edge corresponds to two directed edges $\delta$ and $\ovl \delta$.
Let $\Delta ^0$ be the set of vertices and $\Delta^1$  the set of directed edges of $\Delta$. The boundary $\partial\Delta$ is the set of equivalence classes of infinite semi-geodesics in $\Delta$, where two semi-geodesics are equivalent if they agree except on finitely many edges. The boundary has a natural compact totally disconnected topology. The fundamental group $\Gamma$ of $\cl G$ is a free group which acts on $\Delta$ and on $\partial \Delta$. If $\fk M$ is an abelian group then an $\fk M$-valued {\it distribution} on $\partial\Delta$ is a finitely additive $\fk M$-valued measure $\mu$ defined on the  clopen subsets of $\partial \Delta$.
By integration, a distribution may be regarded as an $\fk M$-linear function on the group $C^\infty(\partial \Delta, \fk M)$ of locally constant $\fk M$-valued functions on $\partial \Delta$.
Let
$\fk D^\Gamma(\partial\Delta, \fk M)$ be the additive group of all $\Gamma$-invariant $\fk M$-valued distributions on $\partial\Delta$ and let
$$\fk D^\Gamma_0(\partial\Delta, \fk M)= \{\mu\in \fk D^\Gamma(\partial\Delta, \fk M) : \mu(\partial\Delta)=0\}.$$

\begin{theorem}\label{main}
  There is an isomorphism of abelian groups $$H_1(\cl G, \fk M)\cong \fk D^\Gamma_0(\partial\Delta,\fk M).$$
\end{theorem}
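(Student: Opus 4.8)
The plan is to coordinatize $\partial\Delta$ by the directed edges and then convert $\Gamma$-invariant distributions into $\Gamma$-invariant ``flows'' that descend to $\cl G$. For a directed edge $\delta\in\Delta^1$, with origin $o(\delta)$ and terminus $t(\delta)$, deleting the underlying geometric edge splits the tree into two half-trees; let $\Omega(\delta)\subseteq\partial\Delta$ be the clopen set of boundary points lying in the half-tree containing $t(\delta)$. These sets form a basis for the topology, any two of them are either nested or disjoint, and by compactness every clopen subset of $\partial\Delta$ is a finite disjoint union of them. Thus the Boolean algebra of clopen sets is generated by the $\Omega(\delta)$ subject to the two elementary relations $\partial\Delta=\Omega(\delta)\sqcup\Omega(\ovl\delta)$ and, for each $\delta$ with $t(\delta)=v$, the subdivision $\Omega(\delta)=\bigsqcup\Omega(\delta')$, where $\delta'$ runs over the edges with $o(\delta')=v$ other than $\ovl\delta$.

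The key step is then to show that a distribution $\mu$ amounts to the function $f\colon\Delta^1\to\fk M$, $f(\delta)=\mu(\Omega(\delta))$, with the finitely additive constraints reducing exactly to the two elementary relations. Writing $m=\mu(\partial\Delta)$, the first relation gives $f(\delta)+f(\ovl\delta)=m$, and feeding this into the subdivision relation yields at every vertex $v$ the balance condition $\sum_{o(\delta)=v}f(\delta)=m$. For $\mu\in\fk D_0(\partial\Delta,\fk M)$ we have $m=0$, so $f$ is antisymmetric, $f(\ovl\delta)=-f(\delta)$, and is a flow: $\sum_{o(\delta)=v}f(\delta)=0$ for all $v$. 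Conversely, any antisymmetric flow $f$ produces a well-defined $\mu\in\fk D_0$ by setting $\mu(\Omega(\delta))=f(\delta)$ and extending additively, the elementary relations being precisely what consistency requires. Hence $\fk D_0(\partial\Delta,\fk M)$ is isomorphic to the group of antisymmetric $\fk M$-valued flows on $\Delta^1$.

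Next I would impose $\Gamma$-invariance. Since $g\cdot\Omega(\delta)=\Omega(g\delta)$, invariance of $\mu$ is equivalent to $f(g\delta)=f(\delta)$ for all $g\in\Gamma$, i.e. $f$ factors through $\Delta^1\to\Gamma\backslash\Delta^1=\cl G^1$. As $\Gamma$ acts freely with $\Gamma\backslash\Delta=\cl G$, antisymmetry and the vertex-balance condition descend, so $\fk D_0^\Gamma(\partial\Delta,\fk M)$ is isomorphic to the group of antisymmetric $\fk M$-valued flows on the finite graph $\cl G$. Finally, choosing an orientation identifies an antisymmetric function on the directed edges of $\cl G$ with a $1$-chain in $C_1(\cl G,\fk M)$, and the condition $\sum_{o(\delta)=v}f(\delta)=0$ is exactly the vanishing of its boundary; so this group is $Z_1(\cl G,\fk M)=\ker(\partial_1)$. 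Since $\cl G$ is one-dimensional there are no $2$-chains, whence $H_1(\cl G,\fk M)=Z_1(\cl G,\fk M)$, and composing the isomorphisms gives $H_1(\cl G,\fk M)\cong\fk D_0^\Gamma(\partial\Delta,\fk M)$.

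I expect the main obstacle to be the book-keeping in the first two steps: verifying that the Boolean algebra of clopen sets is generated by the $\Omega(\delta)$ modulo only the elementary subdivision relations, so that a finitely additive $\fk M$-valued measure really is the same as an edge function satisfying the two local relations, with no hidden global constraint. This rests on the tree property that basic sets are nested or disjoint, together with compactness of $\partial\Delta$; once it is secured, the remaining identifications are routine. It is worth noting that the mass-zero hypothesis is what makes the argument clean: for a general $\mu$ one obtains only $\sum_{o(\delta)=v}f(\delta)=m$, and summing over $\cl G$ forces $\chi(\cl G)\,m=0$, which is where the torsion hypothesis mentioned in the abstract would enter.
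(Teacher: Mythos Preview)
Your proposal is correct and follows essentially the same route as the paper: both identify $\fk D^\Gamma_0(\partial\Delta,\fk M)$ with the group of $\Gamma$-invariant antisymmetric flows on $\Delta^1$ via $\mu\mapsto(\delta\mapsto\mu(\Omega_\delta))$, descend these to flows on $\cl G$ using the relations $\lambda(x)+\lambda(\ovl x)=0$ and $\sum_{o(x)=v}\lambda(x)=0$, and then recognize the latter group as $\ker\partial=H_1(\cl G,\fk M)$. The only cosmetic difference is that the paper packages the forward direction as an explicit map $\alpha\mapsto\mu_\alpha$ defined by $\mu_\alpha(\Omega_\delta)=\langle\alpha-\ovl\alpha,\Gamma\delta\rangle$ and checks well-definedness via a small commutative diagram, whereas you phrase both directions symmetrically in terms of edge functions; your closing remark on $\chi(\cl G)\cdot m=0$ is exactly the content of the paper's Proposition on torsion.
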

Let $\chi(\cl G)$ denote the Euler characteristic of $\cl G$. If $\fk M$ has no $\chi(\cl G)$-torsion then each element of $\fk D^\Gamma(\partial\Delta, \fk M)$ has total mass zero [Proposition \ref{L3}].
Setting $\fk M=\bb Z$ gives:
\begin{corollary}
    There is an isomorphism of abelian groups $$H_1(\cl G, \bb Z)\cong \fk D^\Gamma(\partial\Delta,\bb Z).$$
\end{corollary}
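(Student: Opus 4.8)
The plan is to set up an explicit dictionary between distributions and $1$-chains, using the ``shadows'' of edges as the fundamental clopen sets. For a directed edge $\delta\in\Delta^1$ with origin $o(\delta)$ and terminus $t(\delta)$, let $\Omega(\delta)\subseteq\partial\Delta$ be the set of ends $\xi$ whose geodesic ray from $o(\delta)$ begins with $\delta$. Since $\Delta$ is locally finite, each $\Omega(\delta)$ is clopen, and I would first record two combinatorial identities. First, for a fixed vertex $v$ the sets $\{\Omega(\delta):o(\delta)=v\}$ partition $\partial\Delta$, because every end is reached from $v$ along a unique initial edge. Second, there is the subdivision rule
$$\Omega(\delta)=\bigsqcup_{o(\delta')=t(\delta),\ \delta'\neq\ovl\delta}\Omega(\delta'),$$
obtained by looking one step further out. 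Because any two shadows are either nested or disjoint, and any clopen set is compact and open, every clopen subset of $\partial\Delta$ is a finite disjoint union of shadows; so a distribution $\mu$ is completely determined by, and may be assembled from, its values on the $\Omega(\delta)$, subject only to the additivity relations forced by the two identities above.

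Next I would define the candidate isomorphism. Recalling that $\cl G=\Delta/\Gamma$ with $\Gamma$ acting freely, a $\Gamma$-invariant function on $\Delta^1$ descends to a function on the directed edges of $\cl G$. Given $\mu\in\fk D^\Gamma_0(\partial\Delta,\fk M)$, set $c(\delta)=\mu(\Omega(\delta))$; this is $\Gamma$-invariant, hence descends to $\ovl c$ on $\cl G$. The complementary relation $\Omega(\delta)\sqcup\Omega(\ovl\delta)=\partial\Delta$ together with $\mu(\partial\Delta)=0$ gives $c(\ovl\delta)=-c(\delta)$, and the vertex partition gives $\sum_{o(\delta)=v}c(\delta)=\mu(\partial\Delta)=0$. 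These are exactly antisymmetry and the cycle condition, so $\ovl c$ lies in $Z_1(\cl G,\fk M)$; since $\cl G$ is one-dimensional, $Z_1(\cl G,\fk M)=H_1(\cl G,\fk M)$. This defines a homomorphism $\Phi\colon\fk D^\Gamma_0(\partial\Delta,\fk M)\to H_1(\cl G,\fk M)$.

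Injectivity is immediate: $\mu$ is determined by its values on shadows, and these are recovered from $\ovl c$ by lifting $\Gamma$-invariantly. For surjectivity I would start from a cycle $\ovl c\in Z_1(\cl G,\fk M)$, lift it to a $\Gamma$-invariant antisymmetric cycle $c$ on $\Delta^1$, and construct a distribution $\mu$ with $\mu(\Omega(\delta))=c(\delta)$ by declaring $\mu$ of a clopen set to be the sum of the $c$-values of the shadows in any disjoint decomposition. The content is well-definedness: the decomposition is unique up to the subdivision rule, so I must check that $c(\delta)=\sum_{o(\delta')=t(\delta),\,\delta'\neq\ovl\delta}c(\delta')$. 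But the cycle condition at $w=t(\delta)$ reads $\sum_{o(\delta')=w}c(\delta')=0$, and isolating the term $\delta'=\ovl\delta$ and using antisymmetry yields precisely this identity. Hence the two defining relations for a distribution coincide with the two defining relations for a cycle, $\mu$ is a well-defined finitely additive $\fk M$-valued measure, it is $\Gamma$-invariant since $c$ is, and $\mu(\partial\Delta)=\sum_{o(\delta)=v}c(\delta)=0$, so $\mu\in\fk D^\Gamma_0(\partial\Delta,\fk M)$ and $\Phi(\mu)=\ovl c$.

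The main obstacle I anticipate is the passage in the previous paragraph from ``consistent values on shadows'' to ``finitely additive measure on all clopen sets'': one must verify that the subdivision and partition relations generate all additive relations among shadows, so that the assignment extends unambiguously. Once the combinatorics of the tree boundary is organized so that every clopen set has a canonical disjoint shadow decomposition and the only ambiguity is the subdivision move, the verification reduces to the single identity checked above, and the isomorphism follows.
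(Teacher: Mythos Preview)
Your argument is a correct and cleanly organized proof of Theorem~\ref{main}, the isomorphism $H_1(\cl G,\fk M)\cong\fk D^\Gamma_0(\partial\Delta,\fk M)$, and it follows essentially the same route as the paper (you build the map from distributions to cycles while the paper builds its inverse $\alpha\mapsto\mu_\alpha$, but the underlying dictionary between shadow values and edge coefficients is identical). However, the statement you were asked to prove is the Corollary, which concerns $\fk D^\Gamma(\partial\Delta,\bb Z)$ rather than $\fk D^\Gamma_0(\partial\Delta,\bb Z)$: the subscript~$0$ has been dropped.

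You never address why every $\Gamma$-invariant $\bb Z$-valued distribution must have total mass zero, and your argument uses the hypothesis $\mu(\partial\Delta)=0$ essentially---both to derive the antisymmetry $c(\ovl\delta)=-c(\delta)$ from $\Omega(\delta)\sqcup\Omega(\ovl\delta)=\partial\Delta$ and to obtain the cycle condition at each vertex. A telltale sign is that your proof never invokes any special property of $\bb Z$; it goes through verbatim for an arbitrary abelian group $\fk M$, so what you have actually established is the general Theorem rather than the integer-specific Corollary. The missing step is the Euler-characteristic argument of Proposition~\ref{L3}: writing $\sigma=\mu(\partial\Delta)$ and $\lambda(\Gamma\delta)=\mu(\Omega_\delta)$, summing the vertex relations over $V$ gives $n_0\sigma=\sum_{x\in E}\lambda(x)$, while summing the edge relations over $E_+$ gives $\sum_{x\in E}\lambda(x)=n_1\sigma$; hence $\chi(\cl G)\cdot\sigma=0$, and since $\bb Z$ is torsion-free this forces $\sigma=0$, i.e.\ $\fk D^\Gamma(\partial\Delta,\bb Z)=\fk D^\Gamma_0(\partial\Delta,\bb Z)$.
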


A. Haefliger and L. Banghe have proved a continuous analogue of Theorem \ref{main}: if $\Gamma$ is the fundamental group of a compact surface of genus $g$ then the space of $\Gamma$-invariant (classical) distributions on $S^1$ which vanish on constant functions has dimension $2g$  \cite[Theorem 5.A.2]{lott}.

The motivation for this article came from $C^*$-algebraic K-theory, as explained in Section \ref{K-theory} below.

\section{Construction of distributions}

Choose an orientation on $\Delta^1$ which is invariant under $\Gamma$. This orientation consists of a partition of $\Delta^1$ and a bijective involution
$$\delta\mapsto \overline \delta : \Delta^1 \to \Delta^1$$
 which interchanges the two components of $\Delta^1$. Each directed edge $\delta$ has an
initial vertex $o(\delta)$ and a terminal vertex $t(\delta)$, such that $o(\overline \delta)=t(\delta)$. The maps $\delta\mapsto \overline \delta$, $\delta\mapsto o(\delta)$ and $\delta\mapsto t(\delta)$ are $\Gamma$-equivariant.
The quotient graph $\cl G=\Gamma\backslash\Delta$ has vertex set $V=\Gamma\backslash\Delta^0$
and directed edge set $E=\Gamma\backslash\Delta^1$.
There are induced maps $x\mapsto \overline x$, $x\mapsto o(x)$ and $x\mapsto t(x)$ on the quotient and
the partition of $\Delta^1$ passes to a partition
$$E=E_+\sqcup\overline{E_+}.$$

If $\delta\in \Delta^1$, let $\Omega_\delta$ be the clopen subset of $\partial\Delta$ corresponding to the set of all semi-geodesics with initial edge $\delta$ and initial vertex $o(\delta)$.

\centerline{
\beginpicture
\setcoordinatesystem units <1cm, 1cm>    % sets scale
\setplotarea  x from 0 to 4,  y from -1.5 to 1.5
\put{${\delta}$}   [b] at 0.5 0.25
\put{$_\bullet$}  at  0 0
\put{$o(\delta)$}  at  -0.4 0
\put{$_\bullet$}  at  1 0
\put{${\Omega(\delta)}$}   [l] at 4 0
\arrow <6pt> [.2, .67] from  0.4 0  to 0.6 0
\setlinear
\plot    0  0  1 0  2.4 0.5 /
\plot    1  0  2.4 0.2 /
\plot    1  0  2.4 -0.2 /
\plot    1  0  2.4 -0.5 /
\setplotsymbol({$\cdot$}) \plotsymbolspacing=3pt
\circulararc 25 degrees from 3.5 -1 center at -1 0
\endpicture
}
The sets $\Omega_\delta$, $\delta\in \Delta^1$ form a basis for a totally disconnected compact topology on $\partial \Delta$ which is described as an inverse limit in \cite[I.2.2]{ser}. Any clopen set $V$ in $\partial\Delta$ is a finite disjoint union of sets of the form $\Omega_\delta$. Indeed, choose a base vertex $\xi$. Then, for all sufficiently large $n$, $V$ is a disjoint union of sets of the form $\Omega_\delta$, where $\delta$ is directed away from $o$ and $d(o(\delta), \xi) = n$. The following relations are satisfied :
\begin{equation}\label{relation}
 \Omega_\delta= \displaystyle\bigsqcup_{\substack{o(\delta')=t(\delta) \\ \delta'\ne\overline\delta}}\Omega_{\delta'}.
\end{equation}
Let $\fk M$ be an abelian group and let $\mu$ be an $\fk M$-valued distribution on $\partial\Delta$. Then
\begin{subequations}\label{rel1}
\begin{eqnarray}
\sum_{\substack{\delta\in \Delta^1 \\ o(\delta)=\xi}}\mu(\Omega_\delta) &=& \mu(\partial\Delta)  ,\qquad \text{for} \ \xi\in \Delta^0; \label{rel1a}\\
\mu(\Omega_\delta) + \mu(\Omega_{\overline \delta}) &=& \mu(\partial\Delta)   ,\qquad \text{for} \ \delta\in \Delta^1. \label{rel1b}
\end{eqnarray}
\end{subequations}
\refstepcounter{picture}
\begin{figure}[htbp]\label{relationdiag}
\centerline{
\beginpicture
\setcoordinatesystem units <0.5cm,0.866cm>   % sets scale
\setplotarea x from -4 to 4, y from -2  to 2         % sets plot size up
\putrule from 0 0 to 2 0
\putrule from -2 1 to -1 1
\putrule from -2 -1 to -1 -1
%\putrule from -1 1 to -1 1.5
%\putrule from -1 -1 to -1 -1.5
\setlinear
\plot -1 1  0 0  -1 -1 /
\plot 2.75 0.433  2 0  2.75 -0.433  /
\plot 2.9 0.2  2 0  2.9 -0.2  /
\plot -1.7 1.25  -1 1  -1.2  1.45  /
\plot -1.7 -1.25  -1 -1  -1.2  -1.45  /
\plot  -1 1   -0.8 1.5 /
\plot  -1 -1   -0.8 -1.5 /
\put{$_\bullet$}  at  0 0
\put{$\delta$}[b] at  1.4 0.2
\put{$\xi$}[r] at  -0.3 0
\arrow <6pt> [.3,.67] from  1.2 0 to  1.4 0
\arrow <6pt> [.3,.67] from  -0.6 0.6 to  -0.7 0.7
\arrow <6pt> [.3,.67] from  -0.6 -0.6 to -0.7 -0.7
\setplotsymbol({$\cdot$}) \plotsymbolspacing=4pt
\circulararc 60 degrees from 3.5 -1 center at 0 0
\circulararc 60 degrees from 0 2.2 center at 0 0
\circulararc -60 degrees from 0 -2.2 center at 0 0
\setplotsymbol({\fiverm .}) \plotsymbolspacing= .4pt  % RESTORE DEFAULT
\setcoordinatesystem units <1cm, 1cm> point at -7 0
\setplotarea  x from -4 to 4,  y from -2 to 2
\put{$\delta$}   [b] at 0.5 0.25
\put{$_\bullet$}  at  0 0
\put{$_\bullet$}  at  1 0
\put{${\Omega_\delta}$}   [l] at 3.75 0
\put{${\Omega_{\overline \delta}}$}   [r] at -2.75 0
\arrow <6pt> [.2, .67] from  0.4 0  to 0.6 0
\setlinear
\plot   -1.4 0.5  0 0  1 0  2.4 0.5 /
\plot    1  0  2.4 0.2 /
\plot    1  0  2.4 -0.2 /
\plot    1  0  2.4 -0.5 /
\plot    0  0  -1.4 0.2 /
\plot    0  0  -1.4 -0.2 /
\plot    0  0  -1.4 -0.5 /
\setplotsymbol({$\cdot$}) \plotsymbolspacing=4pt
\circulararc 25 degrees from 3.5 -1 center at -1 0
\circulararc 25 degrees from -2.5 1 center at 2 0
\endpicture
}
\end{figure}

For each $\alpha = \sum_{x\in E_+} n_xx \in H_1(\cl G, \fk M)$, define a $\Gamma$-invariant distribution $\mu_\alpha$ by
\begin{equation}\label{map}
\mu_\alpha(\Omega_\delta)= \langle\alpha-\overline\alpha, \Gamma \delta\rangle ,
\end{equation}
where $\langle\cdot\,, \cdot\rangle$ is the standard inner product $\fk ME \times \fk ME \to \fk M$ and $\overline\alpha = \sum_{x\in E_+} n_x\overline x$. Thus, if $x\in E_+$,
$$
\mu_\alpha(\Omega_\delta) =
\begin{cases}
n_x & \text{if \quad $\Gamma \delta=x$},\\
-n_x & \text{if \quad $\Gamma \delta=\ovl x$}.
\end{cases}
$$
The verification that $\mu_\alpha$ is well defined is given below and it is clear from (\ref{map}) that $\mu_\alpha$ is $\Gamma$-invariant.
%It is clear from (\ref{map}) that $\mu_\alpha$ is $\Gamma$-invariant and we shall see below that it is well defined.
 \begin{example}
For the directed graph below, with two vertices and three edges, the boundary distribution corresponding to the $1$-cycle $\alpha = a+2b-3c$ satisfies
$$
\mu_\alpha(\Omega_\delta) =
\begin{cases}
1 & \text{if \quad $\Gamma \delta=a$},\\
2 & \text{if \quad $\Gamma \delta=b$} \\
-3 & \text{if \quad $\Gamma \delta=c$}.
\end{cases}
$$
\refstepcounter{picture}
\begin{figure}[htbp]\label{3loop}
\hfil
\centerline{
\beginpicture
\setcoordinatesystem units <.1cm,.1cm>    % sets scale
\setplotarea x from -25 to 25, y from -32  to 32         % sets plot size up
%\putrule from 0 0 to 2 0
%\put{22 11}  at  22 11
\setlinear
%top right branch
\plot  0 0  9.5 6 / \plot  9.5 6   16 4 / \plot  9.5 6   11 13 /
\plot 16 4  18.5 -1 / \plot 16 4  20.5 6 / \plot 11 13   15 16 / \plot 11 13  8 17 /
\plot 18.5 -1  20.5 -2 / \plot 18.5 -1  17.5 -3 /  \plot 20.5 6  23 5 / \plot 20.5 6  21.5 8.5 /
\plot 15 16  18 15.5  / \plot 15 16  15.5 19  /  \plot 8 17  8.4 19.8 /   \plot 8 17  5 17.5 /
%%%%%%%%%%%%%%%%%%%%%%%%%%%%%%%%%%%%%%%%%%%%%%%%%%%%%%%%%%%%%%%%%%%%%%%%%%%%%%%%%%
\plot  20.5 -2   20.8 -2.7 /  \plot  20.5 -2   21.2 -1.7 / \plot 17.5 -3 16.7 -3.3 / \plot 17.5 -3 17.7 -3.7 /
\plot 23 5  23.7 5.5 /  \plot 23 5 23.4 4.3 / \plot 21.5 8.5  22.4 8.8 /  \plot 21.5 8.5  21.1 9.4 /
\plot 18 15.5  18.7 14.8 /\plot 18 15.6  18.8 16.2  / \plot 15.5 19  14.9 19.7  / \plot 15.5 19  16.1 19.6 /
\plot 8.4 19.8  9 20.5 / \plot 8.4 19.8  8 20.6 /
\plot 5 17.5 4.5 18.2 / \plot 5 17.5 4.3 17.1 /
%%%%%%%%%%%%%%%%%%%%%%%%%%%%%%%%%%%%%%%%%%%%%%%%%%%%%%%%%%%%%%%%%%%%%%%%%%%%%%%%%
\arrow <6pt> [.4, 1.3] from  5.7 3.6   to   6.72 4.2
\arrow <6pt> [.4, 1.3] from  -5.7 3.6   to   -6.72 4.2
\arrow <6pt> [.4, 1.3] from  0 -6   to   0  -7
\arrow <8pt> [.3, 1] from  -10.8 12  to  -9.8 8  %%%%%%%%%%%%%%%NEW
\arrow <8pt> [.3, 1] from   -13  5  to  -12  5.3 %%%%%%%%%%%%%%%NEW
\put{$a$}   at  -5  6
\put{$b$}   at   5  6
\put{$c$}   at  -3 -5
\put{$b$}   at   -8 12
\put{$c$}   at   -13 2.5
\put{$\mu_\alpha=2$}  at  32 18
\put{$\mu_\alpha=-2$}  at  -23 27
\put{$\mu_\alpha=3$}  at  -37 3
\put{$\mu_\alpha=-3$}  at  0 -32
%%%%%%%%%%%%%%%%%%%%%%%%%%%%%%%%%%%%%%%%%%%%%%%%%%%%%%%%%%%%%%%%%%%%%%%%%%%%%%%%%
%top left branch
\plot  0 0  -9.5 6 / \plot  -9.5 6   -16 4 / \plot  -9.5 6   -11 13 /
\plot -16 4  -18.5 -1 / \plot -16 4  -20.5 6 / \plot -11 13   -15 16 / \plot -11 13  -8 17 /
\plot -18.5 -1  -20.5 -2 / \plot -18.5 -1  -17.5 -3 /  \plot -20.5 6  -23 5 / \plot -20.5 6  -21.5 8.5 /
\plot -15 16  -18 15.5  / \plot -15 16  -15.5 19  /  \plot -8 17  -8.4 19.8 /   \plot -8 17  -5 17.5 /
%%%%%%%%%%%%%%%%%%%%%%%%%%%%%%%%%%%%%%%%%%%%%%%%%%%%%%%%%%%%%%%%%%%%%%%%%%%%%%%%%%
\plot  -20.5 -2   -20.8 -2.7 /  \plot  -20.5 -2   -21.2 -1.7 / \plot -17.5 -3 -16.7 -3.3 / \plot -17.5 -3  -17.7 -3.7 /
\plot -23 5  -23.7 5.5 /  \plot -23 5 -23.4 4.3 / \plot -21.5 8.5  -22.4 8.8 /  \plot -21.5 8.5   -21.1 9.4 /
\plot -18 15.5  -18.7 14.8 /\plot -18 15.6  -18.8 16.2  / \plot -15.5 19  -14.9 19.7  / \plot -15.5 19  -16.1 19.6 /
\plot -8.4 19.8  -9 20.5 / \plot -8.4 19.8  -8 20.6 /
\plot -5 17.5 -4.5 18.2 / \plot -5 17.5 -4.3 17.1 /
%%%%%%%%%%%%%%%%%%%%%%%%%%%%%%%%%%%%%%%%%%%%%%%%%%%%%%%%%%%%%%%%%%%%%%%%%%%%%%%%%%%
%bottom branch
\plot  0 0   0 -11 /
%bottom right sub-branch
\plot  0 -11  6 -16 /
\plot  6 -16  11 -15 /
\plot  11 -15  13.5 -12 / \plot  11 -15   14.5 -17 /
\plot  6 -16  6.5 -21 /
\plot  6.5 -21   9 -23 / \plot  6.5 -21   5 -23.7 /
\plot  13.5 -12  13.5 -11 /  \plot  13.5 -12  14.5 -11.8 / \plot  14.5 -17  15.5 -17 /  \plot  14.5 -17  14.8 -18.1 /
\plot 9 -23  10 -23.2 / \plot 9 -23  9.1 -24 /  \plot 5 -23.7   5 -24.5 / \plot 5 -23.7   4.1 -23.9 /
%bottom left sub-branch
\plot  0 -11  -6 -16 /
\plot  -6 -16  -11 -15 /
\plot  -11 -15  -13.5 -12 / \plot  -11 -15   -14.5 -17 /
\plot  -6 -16  -6.5 -21 /
\plot  -6.5 -21   -9 -23 / \plot  -6.5 -21   -5 -23.7 /
\plot  -13.5 -12  -13.5 -11 /  \plot  -13.5 -12  -14.5 -11.8 / \plot  -14.5 -17  -15.5 -17 /  \plot  -14.5 -17  -14.8 -18.1 /
\plot -9 -23  -10 -23.2 / \plot -9 -23  -9.1 -24 /  \plot -5 -23.7   -5 -24.5 / \plot -5 -23.7   -4.1 -23.9 /
\setplotsymbol({$\cdot$}) \plotsymbolspacing=4pt
%\setdashes
\circulararc 90 degrees from 28 -8 center at 0 0
\circulararc 37 degrees from -8  28 center at 0 0
\circulararc 37 degrees from -26.4  12 center at 0 0
\circulararc 90 degrees from -20.6  -20.6 center at 0 0
%GRAPH PICTURE
\setcoordinatesystem units <0.4cm, 0.4cm> point at 18 0
\setplotarea  x from -4 to 4,  y from -2 to 2
\setplotsymbol({\fiverm .}) \plotsymbolspacing= .4pt  %DEFAULT
\put{$\alpha = a+2b-3c$}   [b] at 0 -6
\put{$a$}   [b] at 0.1 2.5
\put{$b$}   [b] at 0.1 0.2
\put{$c$}   [b] at 0.1 -2.8
\put{$\bullet$}  at -3 0
\put{$\bullet$}  at 3 0
\arrow <8pt> [.2, .67] from  0 0  to  0.1 0
\arrow <8pt> [.2, .67] from  0 2  to  0.1 2
\arrow <8pt> [.2, .67] from  0 -2  to  0.1 -2
\putrule from -3 0 to 3 0
\setlinear
%\circulararc 360 degrees from 3 0 center at 0 0
\setquadratic
\plot -3 0  0 2  3 0 /
\plot -3 0  0 -2  3 0 /
\endpicture
}
\end{figure}
\end{example}

Recall that $H_1(\cl G, \fk M)=\ker\partial$, where the boundary map  $\partial : \fk M E_+ \to \fk M V$ is defined by
$\partial x = t(x)-o(x)$ \cite[Section II.2.8]{ser}.
In order to check that $\mu_\alpha$ is well defined, if $\alpha\in H_1(\cl G, \fk M)$, it is enough to show that equation (\ref{map}) respects the relation (\ref{relation}). We must therefore show that, for all $x\in E$,
\begin{equation}
\langle\alpha-\overline\alpha, Tx\rangle =  \langle\alpha-\overline\alpha, x\rangle ,
\end{equation}
where $T : \fk M E \to \fk M E$ is defined by
\begin{equation}\label{T}
Tx = \sum_{\substack{o(y)=t(x)\\ y\ne \ovl x}}y= \left(\sum_{o(y)=t(x)}y\right) -\overline x.
\end{equation}
Equivalently, it is necessary that $(I-T^*)(\alpha-\overline\alpha)=0$, where $T^*$ is the adjoint of $T$.
Define homomorphisms $\vf_0: \fk MV \to \fk ME$ and $\vf_1: \fk ME_+ \to \fk ME$ by
\begin{equation*}
  \vf_0(v) = \sum_{o(y)=v} y \qquad \text{and} \qquad   \vf_1(x)=x-\ovl x.
\end{equation*}
An easy calculation, using the identity
\begin{equation*}
(I-T^*)x =x+\ovl x- \left(\sum_{t(y)=o(x)}y\right),
\end{equation*}
shows that the following diagram commutes:
  \begin{equation}\label{cd}
\begin{CD}
\fk MV                   @<\partial<<                  \fk ME_+\\
@V\vf_0VV                                              @VV\vf_1V \\
\fk ME                  @<<I-T^*<                        \fk ME
\end{CD}
\end{equation}
If $\alpha \in H_1(\cl G, \fk M)$, it follows that
$(I-T^*)(\alpha-\overline\alpha)=\varphi_0\circ \partial (\alpha)=0$,
as required.

\begin{lemma}
 Let $\fk M$ be an abelian group.
  The map $\alpha\mapsto\mu_\alpha$ is an injection from $H_1(\cl G, \fk M)$ into $\fk D^\Gamma_0(\partial\Delta,\fk M)$.
\end{lemma}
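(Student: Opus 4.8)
The plan is to exploit the explicit formula (\ref{map}), which shows at once that $\alpha\mapsto\mu_\alpha$ is a homomorphism of abelian groups. It therefore suffices to establish two things: that the image lies in $\fk D^\Gamma_0(\partial\Delta,\fk M)$, i.e. that every $\mu_\alpha$ has total mass zero, and that the kernel of the map is trivial. The $\Gamma$-invariance of $\mu_\alpha$, and the fact that (\ref{map}) respects relation (\ref{relation}) and hence extends to a genuine finitely additive distribution, have already been verified via the commuting diagram (\ref{cd}); so these are the only remaining points.

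To see that $\mu_\alpha(\partial\Delta)=0$, I would apply relation (\ref{rel1b}) to $\mu_\alpha$ itself. Fix any $\delta\in\Delta^1$ and let $x\in E_+$ be the generator underlying $\Gamma\delta$. The coefficient of $\ovl x$ in $\alpha-\ovl\alpha$ is the negative of the coefficient of $x$, so reading off (\ref{map}) gives $\mu_\alpha(\Omega_\delta)=-\mu_\alpha(\Omega_{\ovl\delta})$, regardless of whether $\Gamma\delta$ equals $x$ or $\ovl x$. Substituting into
$$\mu_\alpha(\Omega_\delta)+\mu_\alpha(\Omega_{\ovl\delta})=\mu_\alpha(\partial\Delta)$$
forces $\mu_\alpha(\partial\Delta)=0$, so indeed $\mu_\alpha\in\fk D^\Gamma_0(\partial\Delta,\fk M)$.

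For injectivity, suppose $\mu_\alpha=0$ and write $\alpha=\sum_{x\in E_+}n_x x$. Since the covering map $\Delta\to\cl G$ is surjective on directed edges, for each $x\in E_+$ I can choose some $\delta\in\Delta^1$ with $\Gamma\delta=x$; then (\ref{map}) yields $n_x=\mu_\alpha(\Omega_\delta)=0$. As this holds for every $x\in E_+$, we conclude $\alpha=0$, so the homomorphism has trivial kernel and is therefore injective.

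I do not anticipate a genuine obstacle here. The substantive work — that (\ref{map}) is compatible with the defining relation (\ref{relation}), so that $\mu_\alpha$ is well defined — was already dispatched using the commutativity of (\ref{cd}). Granting that, both the total-mass computation and the recovery of the coefficients $n_x$ reduce to evaluating (\ref{map}) on the basic clopen sets $\Omega_\delta$, and the only mild care needed is to note the antisymmetry of $\alpha-\ovl\alpha$ under the bar involution when handling the two cases $\Gamma\delta=x$ and $\Gamma\delta=\ovl x$ uniformly.
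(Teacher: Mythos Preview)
Your proposal is correct and follows essentially the same approach as the paper: both use the antisymmetry of $\alpha-\ovl\alpha$ under the bar involution together with relation (\ref{rel1b}) to obtain $\mu_\alpha(\partial\Delta)=0$, and the paper simply declares injectivity to be ``straightforward,'' which is exactly the coefficient-recovery argument you spell out.
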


\begin{proof}
 Fix $\alpha = \sum_{x\in E_+} n_xx \in H_1(\cl G, \fk M)$. Choose $\delta\in\Delta^1_+$ and let $y=\Gamma\delta\in E_+$. It follows from (\ref{rel1b}) that
 \begin{equation*}
\begin{split}
\mu_\alpha(\partial\Delta)  & =\langle\alpha-\overline\alpha, y + \overline y\rangle\\
& =\langle\alpha-\overline\alpha, y\rangle + \langle\alpha-\overline\alpha, \overline y\rangle\\
& =n_y - n_y\\
& = 0.\\
\end{split}
\end{equation*}
Thus $\mu_\alpha\in \fk D^\Gamma_0(\partial\Delta,\fk M)$.
The proof that the map is injective is straightforward.
\end{proof}

\begin{remark}
  The map $\alpha\mapsto\mu_\alpha$ clearly depends on the choice of orientation on the tree $\Delta$, although the group $H_1(\cl G, \fk M)$ does not \cite[Section II.2.8]{ser}.
\end{remark}

The next result completes the proof of the Theorem \ref{main}.

\begin{lemma}\label{L2}
  Let $\fk M$ be an abelian group.
     The map $\alpha\mapsto \mu_\alpha$ is a surjection from
  $H_1(\cl G, \fk M)$ onto $\fk D^\Gamma_0(\partial\Delta,\fk M)$.
\end{lemma}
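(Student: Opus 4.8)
The plan is to reverse the construction of $\mu_\alpha$. Starting from an arbitrary $\mu\in \fk D^\Gamma_0(\partial\Delta,\fk M)$, I will read off an element $\alpha\in\fk ME_+$ from the values of $\mu$ on the basic clopen sets, and then show that the boundary relations (\ref{rel1a}) and (\ref{rel1b}) force $\alpha$ to be a cycle, i.e. $\alpha\in H_1(\cl G,\fk M)=\ker\partial$. Since a distribution is finitely additive and every clopen set is a finite disjoint union of sets $\Omega_\delta$, the distribution $\mu$ is determined by its values on the $\Omega_\delta$, so it suffices to match these values.

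First I would use $\Gamma$-invariance: $\mu(\Omega_\delta)$ depends only on the orbit $\Gamma\delta\in E=\Gamma\backslash\Delta^1$, so there is a well-defined element $m=\sum_{e\in E}m_e\,e\in\fk ME$ with $m_e=\mu(\Omega_\delta)$ whenever $\Gamma\delta=e$. Two facts must then be extracted. Relation (\ref{rel1b}) together with $\mu(\partial\Delta)=0$ gives $m_{\ovl e}=-m_e$, so that, setting $\alpha=\sum_{x\in E_+}m_x x$, one has $\alpha-\ovl\alpha=m$ and hence $\mu_\alpha(\Omega_\delta)=\langle\alpha-\ovl\alpha,\Gamma\delta\rangle=m_{\Gamma\delta}=\mu(\Omega_\delta)$. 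Thus $\mu_\alpha=\mu$ once we know $\alpha$ is a genuine cycle, which is what remains.

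The heart of the argument is to check $\partial\alpha=0$. Here I would invoke relation (\ref{rel1a}): for each vertex $\xi\in\Delta^0$ we have $\sum_{o(\delta)=\xi}\mu(\Omega_\delta)=\mu(\partial\Delta)=0$. The key point is that the covering $\Delta\to\cl G$ restricts to a bijection from the set of edges of $\Delta$ with origin $\xi$ onto the set of edges of $\cl G$ with origin $v=\Gamma\xi$; this uses that $\Gamma$ acts freely, so two edges with common origin $\xi$ lying in the same orbit must coincide. Consequently the vertex relation descends to $\sum_{o(e)=v}m_e=0$ for every $v\in V$. Splitting this sum according to $E=E_+\sqcup\ovl{E_+}$ and using $m_{\ovl x}=-m_x$ converts it precisely into the statement that the coefficient of $v$ in $\partial\alpha=\sum_{x\in E_+}m_x\bigl(t(x)-o(x)\bigr)$ vanishes. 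Hence $\partial\alpha=0$, so $\alpha\in H_1(\cl G,\fk M)$ and $\mu=\mu_\alpha$, which proves surjectivity.

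I expect the main obstacle to be essentially bookkeeping: correctly matching the local combinatorics of the tree $\Delta$ at a vertex $\xi$ with those of the quotient graph $\cl G$ at $v=\Gamma\xi$, so that the single boundary relation (\ref{rel1a}) translates cleanly into the homological condition $\partial\alpha=0$. Once the free action of $\Gamma$ secures the star bijection, the remaining steps are the same index manipulations already carried out in the commuting diagram (\ref{cd}).
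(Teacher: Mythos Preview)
Your proposal is correct and follows essentially the same route as the paper: define $\lambda(\Gamma\delta)=\mu(\Omega_\delta)$ by $\Gamma$-invariance, set $\alpha=\sum_{x\in E_+}\lambda(x)x$, use the projected relations (\ref{rel1a})--(\ref{rel1b}) with $\mu(\partial\Delta)=0$ to obtain $\lambda(\ovl x)=-\lambda(x)$ and $\partial\alpha=0$, and conclude $\mu_\alpha=\mu$ from $\alpha-\ovl\alpha=\sum_{x\in E}\lambda(x)x$. The only point you make more explicit than the paper is the star bijection coming from the free action of $\Gamma$, which justifies the passage of (\ref{rel1a}) to the quotient; the paper simply states that the relations project.
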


\begin{proof}
  Let $\mu\in \fk D^\Gamma_0(\partial\Delta,\fk M)$. Since $\mu$ is $\Gamma$-invariant, we may define a function
  $\lambda : E \to \fk M$ by $\lambda(\Gamma\delta)= \mu(\Omega_\delta)$.
  Let $\alpha=\sum_{x\in E_+} \lambda(x)x$. We show that $\alpha\in H_1(\cl G, \fk M)$ and that $\mu=\mu_\alpha$.

Since $\mu(\partial\Delta)=0$, the relations (\ref{rel1}) project to  the following flow relations on $\cl G$.
\begin{subequations}\label{relK2}
\begin{eqnarray}
\sum_{\substack{x\in E \\ o(x)=v}}\lambda(x) &=& 0  ,\qquad \text{for} \ v\in V; \label{relK2a}\\
{\lambda(x)+\lambda(\overline x)} &=& 0  ,\qquad \text{for} \ x \in E. \label{relK2b}
\end{eqnarray}
\end{subequations}
It follows from equations (\ref{relK2}) that
\begin{equation*}
\begin{split}
\partial\alpha  & =\sum_{x\in E_+}\lambda(x)(t(x)-o(x))\\
& = \sum_{x\in \overline E_+}\lambda(\overline x)t(\overline x)- \sum_{x\in E_+}\lambda(x)o(x)\\
& = -\sum_{x\in \overline E_+}\lambda(x)o(x)- \sum_{x\in E_+}\lambda(x)o(x)\\
& = - \sum_{x\in E}\lambda(x)o(x)\\
& = - \sum_{v\in V}\left(\sum_{\substack{x\in E \\ o(x)=v}}\lambda(x)\right)v = 0.\\
\end{split}
\end{equation*}
Therefore $\alpha\in H_1(\cl G, \fk M)$. Finally, it follows from (\ref{relK2b}) that
$$\alpha-\overline\alpha=\sum_{x\in E} \lambda(x)x$$
and, for each $\delta \in \Delta^1$,
$$\mu_\alpha(\Omega_\delta)= \langle\alpha-\overline\alpha, \Gamma \delta\rangle =\lambda(\Gamma \delta)=\mu(\Omega_\delta).$$
Therefore $\mu=\mu_\alpha$, as required.
\end{proof}

\begin{remark}
A special case occurs when $\bb K$ is a non-archimedean local field and $\Gamma$ is a torsion free cocompact lattice in $\SL(2,\bb K)$. The Bruhat-Tits building associated with $\SL(2,\bb K)$ is a regular tree $\Delta$ whose boundary $\partial\Delta$ may be identified with the projective line $\bb P_1(\bb K)$ \cite[II.1.1]{ser}. In this context, the relations (\ref{relK2}) assert that if $\mu\in \fk D^\Gamma_0(\partial\Delta,\fk M)$ then the function $\delta\mapsto \mu(\Omega_\delta)$ is a $\Gamma$-invariant harmonic cocycle on $\Delta^1$, in the sense of \cite[3.15]{ga}. The relationship between harmonic cocycles and boundary distributions has been studied in the $p$-adic case in \cite{t}.
\end{remark}

In many cases, every $\Gamma$-invariant distribution on $\partial\Delta$ has total mass zero.

\begin{proposition}\label{L3}
  Let $\fk M$ be an abelian group. If $\fk M$ does not have $\chi(\cl G)$-torsion then
   $$\fk D^\Gamma(\partial\Delta,\fk M)=\fk D^\Gamma_0(\partial\Delta,\fk M).$$
\end{proposition}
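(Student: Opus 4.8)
The plan is to show that the total mass $m=\mu(\partial\Delta)$ of an arbitrary $\mu\in\fk D^\Gamma(\partial\Delta,\fk M)$ is annihilated by $\chi(\cl G)$; the hypothesis that $\fk M$ has no $\chi(\cl G)$-torsion then forces $m=0$, which is exactly the assertion $\fk D^\Gamma(\partial\Delta,\fk M)=\fk D^\Gamma_0(\partial\Delta,\fk M)$.

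First I would push $\mu$ down to the quotient graph exactly as in the proof of \lemref{L2}: by $\Gamma$-invariance the function $\lambda:E\to\fk M$ given by $\lambda(\Gamma\delta)=\mu(\Omega_\delta)$ is well defined. The only change from that lemma is that I do \emph{not} assume $m=0$, so the relations (\ref{rel1}) project to the \emph{inhomogeneous} relations $\sum_{o(x)=v}\lambda(x)=m$ for each $v\in V$, and $\lambda(x)+\lambda(\ovl x)=m$ for each $x\in E$. Verifying that (\ref{rel1a}) and (\ref{rel1b}) descend in this form is the same local bookkeeping as before, now with $m$ in place of $0$ on the right-hand side.

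The key step is then to evaluate $\sum_{x\in E}\lambda(x)$ in two different ways. Summing the vertex relation over all $v\in V$, and using that every directed edge has a unique initial vertex, gives $\sum_{x\in E}\lambda(x)=|V|\,m$. Summing the edge-reversal relation over the geometric edges $x\in E_+$, and using that the pairs $\{x,\ovl x\}$ with $x\in E_+$ partition $E$, gives $\sum_{x\in E}\lambda(x)=|E_+|\,m$. Comparing the two values yields $(|V|-|E_+|)\,m=0$, that is $\chi(\cl G)\,m=0$, since $\chi(\cl G)=|V|-|E_+|$. The torsion hypothesis now gives $m=0$, as required.

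I expect no serious obstacle here once the right manoeuvre is spotted; the substance is the single observation that summing the two families of relations over their natural index sets recovers precisely the two counts $|V|$ and $|E_+|$ whose difference is the Euler characteristic. The only points needing care are that the relations (\ref{rel1}) descend to the quotient in the stated inhomogeneous form and that each summation ranges over the correct index set, both of which are routine.
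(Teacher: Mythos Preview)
Your proposal is correct and follows essentially the same approach as the paper: define $\lambda$ on $E$ by $\Gamma$-invariance, project the relations (\ref{rel1}) to the inhomogeneous system with right-hand side $\sigma=\mu(\partial\Delta)$, and then evaluate $\sum_{x\in E}\lambda(x)$ in two ways to obtain $(|V|-|E_+|)\sigma=0$. The only cosmetic difference is notation ($m$ versus $\sigma$, $|V|,|E_+|$ versus $n_0,n_1$).
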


\begin{proof}
  Let $\mu\in \fk D^\Gamma(\partial\Delta,\fk M)$. For $x\in E$, define $\lambda(x)= \mu(\Omega_\delta)$ if $x=\Gamma\delta$. This is well defined, since $\mu$ is $\Gamma$-invariant.
Let $\sigma=\mu(\partial\Delta)$.
The relations (\ref{rel1}) project to the quotient graph $\cl G$ as follows.
\begin{subequations}\label{relK1}
\begin{eqnarray}
\sum_{\substack{x\in E \\ o(x)=v}}\lambda(x) &=& \sigma  ,\qquad \text{for} \ v\in V; \label{relK1a}\\
{\lambda(x)+\lambda(\overline x)} &=& \sigma  ,\qquad \text{for} \ x \in E. \label{relK1b}
\end{eqnarray}
\end{subequations}

Let $n_0$ [$n_1$] be the number of vertices [edges] of $\cl G$, so that $\chi(\cl G)=n_0-n_1$.
Since the map $x\mapsto o(x)\, :E\to V$ is surjective, the relations (\ref{relK1}) imply that
\begin{equation*}
\begin{split}
n_0\sigma & = \sum_{v\in V}\sum_{\substack{x\in E \\ o(x)=v}}\lambda(x)= \sum_{x\in E}\lambda(x)\\
& = \sum_{x\in E_+}(\lambda(x)+\lambda(\ovl x))= \sum_{x \in E_+}\sigma\\
& = n_1\sigma.
\end{split}
\end{equation*}
Therefore $\chi(\cl G).\sigma=0$. The hypothesis on $\fk M$ implies that $\sigma=0$; in other words, $\mu\in \fk D^\Gamma_0(\partial\Delta,\fk M)$.
\end{proof}

The following example of a nonzero $\Gamma$-invariant boundary distribution shows that the assumption that $\fk M$ does not have $\chi(\cl G)$-torsion cannot be removed.
\begin{example}
  Let $\cl G$ be a $(q+1)$-regular graph, where $q > 3$, so that $\chi(\cl G)=\frac{n_0}{2}(1-q)$, where $n_0$ is the number of vertices of $\cl G$. Let $\fk M = \bb Z_{q-1}$ and define $\mu\in \fk D^\Gamma(\partial\Delta,\bb Z_{q-1})$ by $\mu(\Omega_\delta) = 1$, for all $\delta\in \Delta^1$. Then $\mu(\partial\Delta)=2\not= 0$.
\end{example}

\section{The relation to K-theory}\label{K-theory}

The motivation for this article came from the study of the K-theory of the crossed product $C^*$-algebra $C(\partial\Delta)\rtimes\Gamma$. Suppose that each vertex of $\cl G$ has at least three neighbours. Then the compact space $\partial\Delta$ is perfect (hence uncountable) and $\cl A = C(\partial\Delta)\rtimes\Gamma$ is a Cuntz-Krieger algebra \cite{rtree}. The group $K_1(\cl A)$ is isomorphic to $U(\cl A)/U_0(\cl A)$, the quotient of the unitary group of $\cl A$ by the connected component of the identity, and it follows from \cite{c2} that $K_1(\cl A) \cong \ker(T-1)$, where $T$ is the map defined by equation (\ref{T}), with $\fk M = \bb Z$. We have the following result.

\begin{proposition}\label{k} Suppose that each vertex of $\cl G$ has at least three neighbours. Then the map $\alpha\mapsto \alpha-\overline \alpha$ is an isomorphism from $H_1(\cl G, \bb Z)$ onto  $\ker(T-1)$.
\end{proposition}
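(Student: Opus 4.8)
The map in question is the restriction to $H_1(\cl G,\bb Z)$ of the homomorphism $\vf_1\colon\bb ZE_+\to\bb ZE$, $\alpha\mapsto\alpha-\overline\alpha$, so my plan is to prove separately that it is injective and that its image is exactly $\ker(T-1)$. Injectivity is immediate: since $E=E_+\sqcup\overline{E_+}$, the coefficients of $\alpha$ and of $\overline\alpha$ occupy disjoint coordinates, so $\alpha-\overline\alpha=0$ forces $\alpha=0$. The substance of the proof is therefore the identification of the image with $\ker(T-1)$, for which I will first describe $\ker(T-1)$ by explicit relations and then match those relations to the conditions defining $\vf_1\bigl(H_1(\cl G,\bb Z)\bigr)$.

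To describe $\ker(T-1)$ I would write a typical element as $\beta=\sum_{x\in E}c_xx$ and compute, using the formula (\ref{T}) for $T$, that the coefficient of $y$ in $T\beta$ equals $\bigl(\sum_{t(x)=o(y)}c_x\bigr)-c_{\overline y}$. Hence $T\beta=\beta$ is equivalent to the family of relations $\sum_{t(x)=o(y)}c_x=c_y+c_{\overline y}$, one for each $y\in E$. The left-hand side depends on $y$ only through $o(y)$, so the quantity $c_y+c_{\overline y}$ depends only on $o(y)$; writing $S(v)=\sum_{t(x)=v}c_x$, applying the relation to $\overline y$ as well, and using that $\cl G$ is connected, one sees that $S$ is a single constant $\sigma$ and that $c_x+c_{\overline x}=\sigma$ for every $x\in E$. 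Thus $\ker(T-1)$ is precisely the set of $\beta$ for which there is a constant $\sigma$ with $c_x+c_{\overline x}=\sigma$ and $\sum_{t(x)=v}c_x=\sigma$ for all $x$ and $v$; these are the analogues for $T$ of the flow relations (\ref{relK1}).

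It then remains to force $\sigma=0$, and here the hypothesis on $\cl G$ enters exactly as in Proposition \ref{L3}. Summing $\sum_{t(x)=v}c_x=\sigma$ over the $n_0$ vertices gives $\sum_{x\in E}c_x=n_0\sigma$, while grouping $E=E_+\sqcup\overline{E_+}$ and using $c_x+c_{\overline x}=\sigma$ gives $\sum_{x\in E}c_x=n_1\sigma$, where $n_1=|E_+|$; hence $\chi(\cl G)\sigma=0$. Since every vertex has at least three neighbours, its degree is at least three, so $2n_1=\sum_{v}\deg(v)\ge 3n_0$, whence $\chi(\cl G)=n_0-n_1<0$ is nonzero; as $\bb Z$ is torsion free this forces $\sigma=0$. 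With $\sigma=0$ the relation $c_x+c_{\overline x}=0$ says $\beta=\alpha-\overline\alpha$ for $\alpha=\sum_{x\in E_+}c_xx$, and the relation $\sum_{t(x)=v}c_x=0$ together with $c_{\overline x}=-c_x$ yields $\sum_{o(x)=v}c_x=0$, that is $\partial\alpha=0$; thus $\alpha\in H_1(\cl G,\bb Z)$ and $\beta\in\vf_1\bigl(H_1(\cl G,\bb Z)\bigr)$. Running the same relations in reverse shows conversely that $\vf_1$ carries $H_1(\cl G,\bb Z)$ into $\ker(T-1)$, so the image is exactly $\ker(T-1)$.

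The one genuinely delicate point is the passage from the single operator identity $T\beta=\beta$ to the two flow relations governed by a single global constant $\sigma$: this is where connectivity of $\cl G$ is indispensable, since it is what propagates the locally constant values of $c_y+c_{\overline y}$ and of $S(v)$ to one constant over the whole graph. Once that is in place, the vanishing of $\sigma$ is a purely numerical consequence of $\chi(\cl G)\ne0$, precisely as in Proposition \ref{L3}, with the three-neighbour hypothesis used only to guarantee $\chi(\cl G)\ne0$; everything else is bookkeeping with the involution $x\mapsto\overline x$.
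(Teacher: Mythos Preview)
Your proof is correct and follows essentially the same route as the paper: compute the coefficient of $y$ in $(T-I)\beta$ to obtain the relation $c_y+c_{\overline y}=\sum_{t(x)=o(y)}c_x$, use connectivity to deduce that both sides equal a single constant $\sigma$, then invoke the Euler-characteristic count of Proposition~\ref{L3} (with the three-neighbour hypothesis giving $\chi(\cl G)\ne 0$) to force $\sigma=0$, and finish with the computation of Lemma~\ref{L2} to identify $\beta$ with $\alpha-\overline\alpha$ for some $\alpha\in H_1(\cl G,\bb Z)$. You have simply written out in full what the paper compresses into the sentence ``the result follows easily, using the arguments of Proposition~\ref{L3} and Lemma~\ref{L2}'', and you also make injectivity and the reverse inclusion explicit.
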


\begin{proof}
Let $\alpha=\sum_{x\in E} \lambda(x)x \in \ker(T-I)$.
If $y\in E$, then the coefficient of $y$ in the sum representing $(T-I)\alpha$ is
$$
\left(\sum_{\substack{x\in E, x\ne \ovl y\\ t(x)=o(y)}}\lambda(x)\right) - \lambda(y)
=\left(\sum_{\substack{x\in E \\ t(x)=o(y)}}\lambda(x)\right) - \lambda(y)-\lambda(\ovl y).
$$
This coefficient is zero, since $\alpha\in\ker (T-I)$. Therefore
\begin{equation}\label{l}
  \lambda(y)+\lambda(\ovl y)=\sum_{\substack{x\in E \\ t(x)=o(y)}}\lambda(x).
\end{equation}
For any $y\in E$, define
$\sigma(y)= \lambda(y)+\lambda(\ovl y)$.
The right hand side of equation (\ref{l}) depends only on $o(y)$, therefore $\sigma(y)$ depends
only on $o(y)$. On the other hand, $\sigma(y)=\sigma(\ovl y)$, so that $\sigma(y)$ depends only on $t(y)$.
Since the graph $\cl G$ is connected, it follows that $\sigma(y)=\sigma$, a constant, for all $y \in E$.
The result follows easily, using the arguments of Proposition \ref{L3} and Lemma \ref{L2}, together with the fact that the Euler characteristic of $\cl G$ is non-zero.
\end{proof}

The natural map from $\Gamma$ into $U(\cl A)$ induces a homomorphism from $\Gamma$ into $K_1(\cl A)$.
The isomorphism $K_1(\cl A) \cong \ker(T-1)$ is described explicitly in \cite[Section 2]{rordam}.
Combining this with Proposition \ref{k} and \cite[Section 1]{rtree}, it is easy to see that the
homomorphism $\Gamma\to K_1(\cl A)$ is surjective.

\end{document}